\documentclass[12pt,a4paper]{amsart}
\usepackage{graphics}
\usepackage{epsfig}
\usepackage{graphicx}
\usepackage{amssymb}
\usepackage[all]{xy}

\advance\hoffset-20mm \advance\textwidth40mm

\newcommand{\SL}{\mathrm{SL}}
\newcommand{\Sp}{\mathrm{Sp}}
\newcommand{\SO}{\mathrm{SO}}
\newcommand{\PSU}{\mathrm{PSU}}

\newcommand{\kk}{\mathbb{K}}
\newcommand{\ZZ}{\mathbb{Z}}
\newcommand{\QQ}{\mathbb{Q}}
\newcommand{\PP}{\mathbb{P}}
\newcommand{\XX}{\mathcal{X}}
\newcommand{\YY}{\mathcal{Y}}
\newcommand{\NN}{\mathcal{N}}
\newcommand{\MM}{\mathcal{M}}
\newcommand{\Cl}{\mathrm{Cl}}
\newcommand{\quot}{/\!\!/}
\theoremstyle{plain}
\newtheorem{theorem}{Theorem}
\newtheorem{proposition}{Proposition}
\newtheorem{lemma}{Lemma}

\theoremstyle{definition}
\newtheorem{definition}{Definition}
\newtheorem{example}{Example}
\theoremstyle{remark}
\newtheorem{remark}{Remark}

\title{Homogeneous toric varieties}

\author[I.V. Arzhantsev and S.A. Gaifullin]{Ivan V. Arzhantsev
\and Sergey A. Gaifullin}

\thanks{Supported by RFBR grants 09-01-00648-a, 08-01-91855KO-a, and the Deligne
2004 Balzan prize in Mathematics.}

\subjclass[2010]{Primary 14M17, 14M25; Secondary 14L30}

\keywords{Toric variety, algebraic group, homogeneous space, Cox construction}

\address{Ivan V. Arzhantsev \\
Department of Higher Algebra, Faculty of Mechanics and Mathematics, Moscow State
University, Leninskie Gory 1, Moscow, 119991, Russia}
\email{arjantse@mccme.ru}

\address{Sergey A. Gaifullin \\
Department of Higher Algebra, Faculty of Mechanics and Mathematics, Moscow State
University, Leninskie Gory 1, Moscow, 119991, Russia}
\email{sgayf@yandex.ru}

\begin{document}

\begin{abstract}
A description of transitive actions of a semisimple
algebraic group $G$ on toric varieties is obtained.
Every toric variety admitting such an action lies between a product
of punctured affine spaces and a product of projective spaces.
The result is based on the Cox realization of a toric variety as a quotient
space of an open subset of a vector space $V$ by a
quasitorus action and on investigation of the $G$-module structure of $V$.
\end{abstract}

\maketitle

\section{Introduction}
We study toric varieties $X$ equipped with a
transitive action of a connected semisimple algebraic group $G$.
In this case $X$ is called  a {\it homogeneous toric
variety}. The ground field $\mathbb{K}$ is algebraically closed and
of characteristic zero.

Consider a quasiaffine variety
$$
\XX = \XX(n_1,\dots,n_m) :=
(\mathbb{K}^{n_1}\setminus \{0 \} )\times\dots\times(\mathbb{K}^{n_m}\setminus\{0\})
$$
with $n_i\geq 2$. The group $G=G_1\times\ldots\times G_m$,
where every component $G_i$ is either $\SL(n_i)$ or $\Sp(n_i)$, and $n_i$ is even in the second case,
acts on $\XX$ transitively and effectively. Let
$\mathbb{S} = (\kk^{\times})^m$ be an
algebraic torus acting on $\XX$ by component-wise scalar multiplication,
and
$$
p \, \colon \, \XX \, \to \ \YY \ := \ \PP^{n_1-1}\times\dots\times\PP^{n_m-1}
$$
be the quotient morphism. Fix a closed subgroup $S\subseteq\mathbb{S}$.
The action of the group $S$ on $\XX$ admits a geometric quotient
$p_X \colon \XX \to X := \XX / S$. The variety $X$ is toric, it carries
the induced action of the quotient group $\mathbb{S}/S$, and there
is a quotient morphism $p^X \colon X \to \YY$ for this action closing the commutative diagram

$$
\xymatrix{
\XX \ar[dr]_p \ar[rr]^{p_X} & & X \ar[dl]^{p^X} \\
 & \YY & }
$$

The induced action of the group $G$ on $X$ is transitive and locally effective.
We say that the $G$-variety $X$ is obtained from $\XX$ by {\it central factorization}.
The following theorem gives a classification of transitive actions of semisimple groups on
toric varieties up to a twist by a diagram automorphism of the acting group.

\begin{theorem}
\label{main}
Let $X$ be a toric variety with a transitive locally
effective action of a connected simply connected semisimple algebraic group $G$.
Then $G=G_1\times\ldots\times G_m$,
where every simple component $G_i$ is either $\SL(n_i)$ or $\Sp(n_i)$,
and the variety $X$ is obtained from
$\XX = \XX(n_1,\ldots,n_m)$ by central factorization.
Conversely, any variety
obtained from $\XX$ by central factorization is a homogeneous toric variety.
\end{theorem}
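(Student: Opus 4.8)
The converse direction is essentially recorded in the construction preceding the statement. A central factorization $X=\XX\quot S$ is a geometric quotient of the toric variety $\XX$ by a subtorus of its big torus, hence is again toric; the commuting $G$-action descends from $\XX$ to $X$, remains transitive, and has kernel contained in the finite centre of $G$, so the action is locally effective. Thus the substance of the theorem is the forward implication, and the plan is to realize $X$ by the Cox construction, lift the $G$-action to the total coordinate space $V$, and read off the $G$-module structure of $V$.

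\emph{Reduction to the Cox picture.} Since $G$ is connected, simply connected and semisimple, $\mathcal{O}(G)^\times=\kk^\times$ and $\mathrm{Pic}(G)=0$; pulling functions back along $G\to X=G/G_{x_0}$ forces $\mathcal{O}(X)^\times=\kk^\times$, so the toric variety $X$ has no torus factor and its Cox construction behaves well. We obtain: a quasitorus $H=\mathrm{Spec}\,\kk[\Cl X]$; an $H$-invariant open subset $\widehat X=V\setminus Z$ of a vector space $V\cong\kk^N$, where $N$ is the number of rays and $\mathrm{codim}_V Z\ge 2$; a geometric quotient $\widehat X\to X=\widehat X\quot H$; and an embedding $H\subseteq T_V=(\kk^\times)^N$ as the kernel of $T_V\to T_X$. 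Because $G$ is simply connected, the $G$-action on $X$ lifts to a regular $G$-action on $\widehat X$ commuting with $H$ and inducing the original action on $X$, and, $V$ being smooth with $V\setminus\widehat X$ of codimension $\ge 2$, this extends to a regular $G$-action on all of $V$ still commuting with the diagonal $H$-action.

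\emph{The $G\times H$-action on $V$.} Connectedness of $G$ makes it act trivially on the lattice $\Cl X$, hence it preserves the $\Cl X$-grading of $\kk[V]=\kk[x_1,\dots,x_N]$, so each graded component $\kk[V]_w$ is a finite-dimensional $G$-module. Transitivity of $G$ on $X$, via the geometric quotient and the commutation of $G$ and $H$, forces $\widehat X$ to be a single $G\times H$-orbit: the $G$-saturation of any single $H$-orbit meets every $H$-orbit, hence equals $\widehat X$. What I then need is that the lifted $G$-action on $V$ is \emph{linear}; granting this, the $H$-weight decomposition $V=\bigoplus_\chi V_\chi$, with $V_\chi$ the span of the coordinates $x_i$ whose divisor class equals $\chi$, is a decomposition into $G$-submodules and $Z$ becomes a union of coordinate subspaces compatible with it. I expect this linearity to be the main obstacle: a priori the extension of the $G$-action over $Z$ need not respect the total-degree filtration of $\kk[V]$, so one must rule out higher-degree contributions in the $G$-orbits of the coordinates — equivalently, rule out surviving relations $D_i\sim D_{j_1}+\dots+D_{j_k}$ among the invariant prime divisors — which seems to require the transitivity hypothesis and the grading in tandem rather than either one alone.

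\emph{Identifying the pieces.} Granting linearity, project the single orbit $\widehat X$ onto a summand $V_\chi$: using that $Z$ is a union of coordinate subspaces of codimension $\ge 2$, one computes that the image is exactly $V_\chi\setminus\{0\}$ (an obstruction would be a $G$-stable union of coordinate subspaces of $V_\chi$, which, being $G$-stable, reduces to $\{0\}$), so $G$ acts transitively on $V_\chi\setminus\{0\}$ and hence on $\mathbb{P}(V_\chi)$. By the classical classification of transitive linear actions, a connected semisimple group transitive on $\mathbb{P}(V_\chi)$ acts through a single simple factor, and that factor together with $V_\chi$ is $\SL(n_\chi)$ or $\Sp(n_\chi)$ ($n_\chi$ even) with its standard module $\kk^{n_\chi}$, the two options for $\SL(n_\chi)$ differing by the diagram automorphism $g\mapsto(g^{\top})^{-1}$; a product of two nontrivial semisimple factors is never transitive, as the corresponding Segre subvariety is invariant. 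Since each simple factor of $G$ must act nontrivially on some $V_\chi$ (local effectiveness) and, by the single-orbit property, on exactly one, we get $G\cong G_1\times\dots\times G_m$ with $G_i\in\{\SL(n_i),\Sp(n_i)\}$, and $Z$ is exactly the union of the subspaces on which an entire block of coordinates vanishes, so $\widehat X\cong\XX(n_1,\dots,n_m)$. Finally $H$, commuting with $G$ and acting through a single character on each $V_\chi$, must be a closed subgroup $S$ of $\mathbb{S}=(\kk^\times)^m$ acting by block scalars, and $X=\widehat X\quot H=\XX\quot S$ exhibits $X$, up to the diagram twist, as a central factorization of $\XX$.
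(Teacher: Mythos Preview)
Your outline follows the paper's strategy (Cox realization, lift of the $G$-action to $V$, analysis of the module structure), but the step you yourself flag as ``the main obstacle'' is a genuine gap, and the paper does not close it by the divisor-relation bookkeeping you suggest. Instead it invokes a linearization theorem: once you know that $G\times H$ has a dense orbit on the affine space $V$ (which your single-orbit argument already gives), \cite[Proposition~5.1]{Kr} says that any generically transitive regular action of a connected reductive group on an affine space is equivalent to a linear one. That is the whole content of the ``linearity'' step; your speculation about excluding relations $D_i\sim D_{j_1}+\dots+D_{j_k}$ is unnecessary.

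After linearity, your route via the $H$-weight decomposition $V=\bigoplus_\chi V_\chi$ can be made to work but is looser than the paper's, and two of your claims are not justified as written. First, the paper upgrades $G\times S$-transitivity on $U$ to $G$-transitivity on $U$ (Proposition~\ref{propprop}); you only establish the former, so your assertion that the projection to $V_\chi$ has image exactly $V_\chi\setminus\{0\}$ with $G$ transitive there is not yet available---your parenthetical ``being $G$-stable, reduces to $\{0\}$'' presupposes irreducibility of $V_\chi$, which is what you are trying to prove. The paper avoids this by decomposing $V$ into \emph{irreducible $G$-summands} and observing that each irreducible component of $V\setminus U$ is a smooth $\kk^\times$-cone, hence a linear subspace, hence a maximal proper $G$-submodule; this forces the summands to be pairwise non-isomorphic and $U=\prod_i(V_i\setminus\{0\})$ in one stroke. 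Second, your ``by the single-orbit property, on exactly one'' needs an actual argument; the paper gives one: if a simple factor $G_l$ acted nontrivially on two summands, then a pair (highest vector, lowest vector) would have to be a $B$-eigenvector for some Borel $B\subset G_l$, contradicting the fact that opposite parabolics never contain a common Borel.
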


Theorem~\ref{main} also describes homogeneous spaces of a semisimple group
that have a toric structure. It is natural to apply the Cox realization
of a variety in order to search for toric varieties in a given class
of varieties. This idea is already used in \cite{Ga}, where
toric affine $\text{SL}(2)$-embeddings are characterized.

In Section~\ref{two} we recall basic facts on the Cox realization
and its generalization. Criterions of existence of an open $G$-orbit on $X$
in terms of $G$- and $(G\times S)$-actions on the total coordinate
space $Z$ are also given there.
In Section~\ref{three} we prove Theorem~\ref{main}.
The next section is devoted to special classes of toric homogeneous
varieties and to a characterization of their fans.
In the last section we consider transitive actions of
reductive groups on toric varieties.

Our results are closely connected with the results
of E.B.~Vinberg \cite{Vin}, where algebraic transformation groups of
maximal rank were classified. Recall that an {\it algebraic
transformation group of maximal rank} is an effective generically
transitive (i.e., with an open orbit) action of an algebraic group $\mathcal{G}$ on an algebraic
variety $X$ such that $\dim X=\mathrm{rk}\, \mathcal{G}$, where $\mathrm{rk}\, \mathcal{G}$
is the rank of a maximal torus $T$ of the group $\mathcal{G}$. In this
situation the induced action of the torus $T$ on $X$ is effective and generically
transitive, see~\cite{Dem}. If the group $\mathcal{G}$ is semisimple,
then an open $\mathcal{G}$-orbit on $X$ is a homogeneous toric variety.
It turns out that in this case $X$ is a product of projective spaces
and $\mathcal{G}$ acts on $X$ transitively. Theorem~\ref{main}
implies that every homogeneous toric variety determines a
reductive transformation group of maximal rank; here $\mathcal{G}$ is
the quotient group $(\mathrm{GL}(n_1)\times\ldots\times\mathrm{GL}(n_m))/S.$

Finally, let us mention a related result from toric topology. A
{\it torus manifold} is a smooth real even-dimensional manifold
$M^{2n}$ with an effective action of a compact torus $(S^1)^n$
such that the set of $(S^1)^n$-fixed points is nonempty.
In~\cite{Kur}, homogeneous torus manifolds are
studied. The latter are torus manifolds $M^{2n}$ with a
transitive action of a compact Lie group $K$ such that the
induced action of a maximal torus of $K$ coincides
with the given $(S^1)^n$-action. It is proved that every
homogeneous torus manifold may be realized as
$$
M=\mathbb{C}\mathbb{P}^{n_1}\times\ldots\times\mathbb{C}\mathbb{P}^{n_k}
\times(S^{2m_1}\times\ldots\times S^{2m_l})/F,
$$
where $S^{2m}$ is a sphere of dimension $2m$, $F$ is a
subgroup of $\mathbb{Z}_2\times\ldots\times\mathbb{Z}_2$ ($l$
copies), and each copy of $\mathbb{Z}_2$ acts on the
corresponding sphere by central symmetry. A compact Lie group
$$
K=\PSU(n_1+1)\times\ldots\times \PSU(n_k+1)\times \SO(2m_1+1)\times\ldots\times \SO(2m_l+1)
$$
acts on $M$ transitively. Moreover, the manifold $M$ is orientable if and only if
$F \subset \SO(2m_1+2m_2+\ldots+2m_l+l)$.

The authors are grateful to E.B.Vinberg and to the referee for useful comments and suggestions.

\section{The Cox construction}
\label{two}

A {\it toric variety} is a normal
algebraic variety with an effective generically transitive action of an
algebraic torus $T$. A toric variety $X$ is {\it non-degenerate} if
any invertible regular function on $X$ is constant.

Let $\Cl(X)$ be the divisor class group of the variety $X$.
It is well-known that the group $\Cl(X)$ of a toric variety $X$
is finitely generated,
see~\cite[Section~3.4]{Ful}. Recall that a {\it quasitorus} is an
affine algebraic group $S$ isomorphic to a direct product of an algebraic
torus $S^0$ and a finite abelian group $\Gamma$. Every closed subgroup of a torus is
a quasitorus. The group of characters of
a quasitorus $S$ is a finitely generated  abelian group.
The {\it Neron-Severi quasitorus} of a toric variety $X$ is a quasitorus
$S$ whose group of characters is identified with $\Cl(X)$.

We come to a canonical quotient realization
of a non-degenerate toric variety $X$ obtained in~\cite{cox}.
Let $d$ be the number of
prime $T$-invariant Weil divisors on $X$. Consider the vector space
$\kk^d$ and the torus $\mathbb{T} = (\kk^{\times})^d$ of all invertible
diagonal matrices acting on $\kk^d$. Then there are a closed embedding
of the Neron-Severi quasitorus $S$ into $\mathbb{T}$ and an open
subset $U \subseteq \kk^d$ such that
\begin{itemize}
\item{}
the complement $\kk^d \setminus U$ is a union of some coordinate
subspaces of dimension $\le d-2$;
\item{}
there exist a categorical quotient $p_X \colon U \to U \quot S$
and an isomorphism $\varphi \colon X \to U \quot S$;
\item{}
via isomorphism $\varphi$, the $T$-action on $X$ corresponds to the
action of the quotient group $\mathbb{T}/S$ on  $U \quot S$.
\end{itemize}

Later this realization was generalized to a
wider class of normal algebraic varieties, see \cite{Hu}, \cite{BH2},
\cite{Ha}. One of the conditions that determines this class is
finite generation of the divisor class group $\Cl(X)$.
This allows to define the Neron-Severi quasitorus $S$ of the variety $X$.
The space $\kk^d$ is replaced by an affine factorial (or,
more generally, factorially graded, see~\cite{Ar}) $S$-variety $Z$.
It is called  the {\em total coordinate space\/} of the variety $X$.
Further, $X$ appears as the quotient space of the categorical quotient
$p_X \colon U \to U \quot S$, where $U$ is an open $S$-invariant
subset of $Z$ such that the complement $Z \setminus U$ is of codimension
at least two in $Z$. The morphism $p_X \colon U \to X \cong U \quot S$
is called the {\em universal torsor\/} over $X$.

Let a connected affine algebraic group $G$ act on
a normal variety $X$. Passing to a finite covering we may
assume that $\Cl(G)=0$ \cite[Proposition~4.6]{KKLV}. Then the action of $G$
on $X$ can be lifted to an action of $G$ on the total
coordinate space $Z$ that commutes
with the $S$-action, see~\cite[Section~4]{ArH}.
It turns out that the set $U$ is $(G\times S)$-invariant and
$p_X \colon U \to X$ is a $G$-equivariant morphism.

\begin{lemma}
\label{lemac}
The following conditions are equivalent.
\begin{enumerate}
\item[(i)]
The action of the group $G$ on $X$ is generically transitive.
\item[(ii)]
The action of the group $G\times S$ on $Z$ is generically transitive.
\end{enumerate}
\end{lemma}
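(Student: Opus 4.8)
The plan is to prove the two implications directly from the data available in the excerpt: $p_X\colon U\to X$ is a surjective $G$-equivariant and $S$-invariant morphism realizing $X$ as $U\quot S$; the set $U$ is a $(G\times S)$-invariant open subset of $Z$ whose complement has codimension at least two; the $G$- and $S$-actions on $Z$ commute; and, since $X$, $U$, $Z$ are irreducible, a dense orbit in any of them is automatically open (an orbit is locally closed, hence open in its closure). Keeping these facts in mind, everything reduces to bookkeeping with orbits under $p_X$.

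First I would do $(ii)\Rightarrow(i)$, which is essentially formal. Let $O\subseteq Z$ be the open $(G\times S)$-orbit. Since $O$ and $U$ are both dense open subsets of the irreducible variety $Z$, their intersection is nonempty; as $O$ is a single orbit and $U$ is $(G\times S)$-invariant, this forces $O\subseteq U$. For $z,z'\in O$ write $z'=(g,s)\cdot z$; using $G$-equivariance and $S$-invariance of $p_X$ we get $p_X(z')=g\cdot p_X(z)$, so $p_X(O)=G\cdot p_X(z)$ is a single $G$-orbit. Because $p_X$ is dominant and $O$ is dense, $p_X(O)$ is a dense, hence open, $G$-orbit in $X$, which is $(i)$.

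Next, $(i)\Rightarrow(ii)$. Let $\Omega=G\cdot x_0$ be the open $G$-orbit and fix $z_0\in p_X^{-1}(x_0)$. The crucial point is that over $\Omega$ the universal torsor $p_X$ has single $S$-orbits as fibres. One way to see this: $\Omega$ is a homogeneous space, hence smooth, hence contained in the smooth locus of $X$, over which $p_X$ is an honest $S$-torsor; alternatively, $p_X$ is a geometric quotient over some dense open $X^{\circ}\subseteq X$, and since $p_X$ is $G$-equivariant, $X^{\circ}$ may be chosen $G$-invariant, so the dense $G$-orbit $\Omega$ lies in $X^{\circ}$. Either way $p_X^{-1}(x_0)=S\cdot z_0$. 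Now for any $z\in p_X^{-1}(\Omega)$ we have $p_X(z)=g\cdot x_0$ for some $g\in G$, so $p_X(g^{-1}\cdot z)=x_0$, whence $g^{-1}\cdot z=s\cdot z_0$ for some $s\in S$ and $z=(g,s)\cdot z_0$. Thus $p_X^{-1}(\Omega)=(G\times S)\cdot z_0$ is a single $(G\times S)$-orbit; being the preimage under $p_X$ of the open set $\Omega$ it is open in $U$, and since $Z\setminus U$ has codimension at least two it is open in $Z$. Hence $G\times S$ has an open orbit on $Z$.

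The implication $(ii)\Rightarrow(i)$ is straightforward. The one point requiring care — the main obstacle — is in $(i)\Rightarrow(ii)$: identifying $p_X^{-1}(\Omega)$ with a single $(G\times S)$-orbit rather than merely with a dense union of such orbits, which is exactly what needs the fibres of the universal torsor over $\Omega$ to be single $S$-orbits; this is why one must invoke either the smoothness of the homogeneous space $\Omega$ or the $G$-invariance of the locus where $p_X$ is a geometric quotient. A more computational route avoiding this identification would run a dimension count based on $\dim Z=\dim X+\dim S$ — valid because $S$ acts with generically finite stabilizers on $U$ — to show directly that $\overline{(G\times S)\cdot z_0}=Z$.
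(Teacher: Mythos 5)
Your proof is correct and follows essentially the same route as the paper: for (i)$\Rightarrow$(ii) you use that the fibres of the universal torsor over the (smooth) open $G$-orbit are single $S$-orbits --- exactly the fact the paper quotes from Hausen --- so that $p_X^{-1}(\Omega)$ is a single $(G\times S)$-orbit, and (ii)$\Rightarrow$(i) is the same formal push-forward of the open orbit through the $G$-equivariant, $S$-invariant map $p_X$. One minor remark: openness of $p_X^{-1}(\Omega)$ in $Z$ already follows from $U$ being open in $Z$, so the codimension-two condition is not needed at that point.
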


\begin{proof}
Let $X_0\subseteq X$ be an open $G$-orbit. Each point $x\in X_0$
is smooth on $X$, and thus the fiber $p_X^{-1}(x)$ is
isomorphic to the quasitorus $S$~\cite[Proposition~2.2,~(iii)]{Ha}.
It shows that the group $G\times S$ acts on $p_X^{-1}(X_0)$ transitively.

Conversely, if $Z_0\subseteq Z$ is an open $(G\times S)$-orbit,
then $Z_0\subseteq U$ and the action of $G$ on the quotient space
$U \quot S$ is generically transitive.
\end{proof}

Assume that the group $G$ has trivial group of characters.
Then the lifting of the action of the group $G$ to $Z$ is unique,
compare \cite[Remark~4.1]{ArH} and \cite[Proposition~1.8]{Ha1}.
Let $H$ be a closed subgroup of $G$.
Every invertible regular function on the homogeneous space
$G/H$ is constant, see~\cite[Proposition~1.2]{Kn}.

\begin{proposition}
\label{a}
The following conditions are equivalent.
\begin{enumerate}
\item[(i)]
The action of the group $G$ on $X$ is generically transitive and
the complement of an open $G$-orbit has codimension at least two in $X$.
\item[(ii)]
The action of the group $G$ on the total coordinate space $Z$
is generically transitive.
\item[(iii)]
The action of the group $G$ on the total coordinate space $Z$
is generically transitive and the complement of an open $G$-orbit
has codimension at least two in $Z$.
\end{enumerate}
\end{proposition}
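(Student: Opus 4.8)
The plan is to prove the implications $(iii)\Rightarrow(ii)$, $(ii)\Rightarrow(i)$, $(ii)\Rightarrow(iii)$ and $(i)\Rightarrow(ii)$; since $(iii)\Rightarrow(ii)$ is trivial, the content lies in the last three. Throughout I will use that $G$ has no nontrivial characters (so $X^*(G\times S)=X^*(S)$), that $Z$ is factorial (hence irreducible and normal) with $Z\setminus U$ of codimension $\ge 2$, that $p_X\colon U\to X$ restricts to an $S$-torsor over the smooth locus $X_{\mathrm{reg}}$, and the canonical grading $\kk[Z]=\bigoplus_{w\in\Cl(X)}\kk[Z]_w$ in which $\kk[Z]_w$ is the space of global sections of the rank-one reflexive sheaf $\mathcal{O}_X(D_w)$ attached to $w\in\Cl(X)$. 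First a common preliminary: in either of the situations $(i)$, $(ii)$ the group $G\times S$ acts on $Z$ with a dense orbit $Z_0\subseteq U$ by Lemma~\ref{lemac}, and $X$ has an open $G$-orbit $X_0$ (given in $(i)$, a consequence of $(ii)$ by Lemma~\ref{lemac}). Every point of $X_0$ is smooth, so each fiber of $p_X$ over $X_0$ is a single $S$-orbit; hence $p_X^{-1}(X_0)$ is a single $(G\times S)$-orbit, and being dense it equals $Z_0$. In particular $X_0=p_X(Z_0)$ and $U\setminus Z_0=p_X^{-1}(X\setminus X_0)$; and in situation $(ii)$ the orbit $Z_0$ is at the same time the open $G$-orbit.

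For $(ii)\Rightarrow(iii)$, assume $(ii)$ and suppose $Z\setminus Z_0$ contained a prime divisor $D$. As $Z\setminus U$ has codimension $\ge 2$, $D$ meets $U$, and $D$ is $G$-invariant because $G$ is connected. Write $D=\{g=0\}$ with $g\in\kk[Z]$ prime; then $g$ is a $G$-semiinvariant whose weight is a character of $G$, hence trivial, so $g$ is $G$-invariant. Since $D\cap Z_0=\emptyset$, both $g$ and $g^{-1}$ are regular on the $G$-homogeneous space $Z_0$, so $g|_{Z_0}$ is a nonzero constant by the result recalled before the proposition; then $g$ equals that constant on the dense subset $Z_0$, hence on all of $Z$, contradicting primality. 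Thus $Z\setminus Z_0$ has codimension $\ge 2$, which is $(iii)$. To deduce $(i)$ from $(ii)$ it remains to check that $X\setminus X_0$ has codimension $\ge 2$ in $X$; if it contained a prime divisor $E$, then, $X$ being normal, $E$ would not be contained in $X_{\mathrm{sing}}$, so $E\cap X_{\mathrm{reg}}$ is dense in $E$, and since $p_X$ is an $S$-torsor over $X_{\mathrm{reg}}$ the set $p_X^{-1}(E\cap X_{\mathrm{reg}})$ would have a component of codimension one in $U$, contained in $p_X^{-1}(X\setminus X_0)=U\setminus Z_0$, contradicting the codimension bound just proved.

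The main point is $(i)\Rightarrow(ii)$. By Lemma~\ref{lemac} the group $G\times S$ is generically transitive on $Z$, with open orbit $Z_0=p_X^{-1}(X_0)$; one has to promote this to generic transitivity of $G$. Fix $z_0\in Z_0$, put $\widetilde H=\mathrm{Stab}_{G\times S}(z_0)$, and let $S_1\subseteq S$ be the image of $\widetilde H$ under the projection $G\times S\to S$. A direct computation with $Z_0\cong(G\times S)/\widetilde H$ (using that $G$ and $S$ commute) shows that the $G$-orbits inside $Z_0$ are in bijection with $S/S_1$, so it suffices to prove $S_1=S$. Suppose not. Since $S_1$ is a closed subgroup of the quasitorus $S$, there is a nontrivial character $\chi$ of $S$ vanishing on $S_1$; its pullback $\widetilde\chi$ to $G\times S$ is trivial on $\widetilde H$, hence $(g,s)\cdot z_0\mapsto\widetilde\chi(g,s)$ is a well-defined, nowhere-vanishing regular function $f$ on $Z_0$ which is an $S$-semiinvariant of weight $-\chi\ne 0$. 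As $X_0$ is smooth, $p_X\colon Z_0\to X_0$ is an $S$-torsor and the grading above restricts to $\kk[Z_0]_w\cong H^0\!\big(X_0,\mathcal{O}_X(D_w)|_{X_0}\big)$; so the existence of the nowhere-vanishing $f$ of weight $-\chi$ forces $\mathcal{O}_X(D_{-\chi})|_{X_0}$ to be trivial, i.e. $-\chi$ lies in the kernel of the restriction map $\Cl(X)\to\Cl(X_0)$. But $X\setminus X_0$ has codimension $\ge 2$ in $X$ by $(i)$, so this restriction is an isomorphism, whence $\chi=0$ — a contradiction. Therefore $S_1=S$, the orbit $Z_0$ is a single $G$-orbit, and $(ii)$ holds.

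I expect $(i)\Rightarrow(ii)$ to be the only genuine obstacle. In the other directions the codimension-two statements become routine once one knows $Z_0=p_X^{-1}(X_0)$ and that $p_X$ is a torsor over the regular locus. By contrast, upgrading generic transitivity of $G\times S$ to that of $G$ really does use the codimension-two hypothesis on $X$: it is precisely what rules out the offending class $-\chi\in\Cl(X)$, equivalently the would-be nowhere-vanishing semiinvariant $f$. Without that hypothesis the map $\Cl(X)\to\Cl(X_0)$ can have a nontrivial kernel and $G$ need not act transitively on $Z$, as already happens for $\SL(2)$ acting on $\PP^2$ through $\SL(3)$.
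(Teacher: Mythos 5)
Your argument is correct, and for the crucial implication (i) $\Rightarrow$ (ii) it takes a genuinely different route from the paper. The paper obtains (i) $\Rightarrow$ (iii) in one stroke: the codimension-two hypothesis makes $p_X\colon p_X^{-1}(X_0)\to X_0$ the universal torsor over the homogeneous space $X_0$, and it then invokes the known description of the universal torsor over $G/H$ as the projection $G/H_1\to G/H$, with $H_1$ the common kernel of the characters of $H$ (\cite[Lemma~3.14]{ARH}), which yields transitivity of $G$ on $p_X^{-1}(X_0)$ immediately. You instead reprove the relevant special case from scratch: you parametrize the $G$-orbits inside the open $(G\times S)$-orbit by $S/S_1$, where $S_1$ is the projection of the stabilizer to $S$, and exclude $S_1\neq S$ by showing that a nontrivial character of $S$ killing $S_1$ would produce a nowhere-vanishing semiinvariant on $p_X^{-1}(X_0)$, hence a trivialization of the corresponding class under $\Cl(X)\to\Cl(X_0)$, which is injective by the codimension-two hypothesis --- a contradiction. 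The use you make of that hypothesis (through $\Cl(X)\cong\Cl(X_0)$) is exactly parallel to the paper's use of it (through identifying the restricted torsor with the universal torsor of $X_0$); what you lose is brevity, what you gain is independence from the cited lemma, and your closing example ($\SL(2)$ acting on $\PP^2$ with open orbit whose complement is a conic) correctly shows the hypothesis cannot be dropped. Your (ii) $\Rightarrow$ (iii) is essentially the paper's argument (a divisor in the complement of the open orbit would be principal, giving a nonconstant invertible function on a homogeneous space of a group without characters), and your (ii) $\Rightarrow$ (i) is an unpacked version of the paper's (iii) $\Rightarrow$ (i).

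One small caveat: the proposition lives in the generality of Section~\ref{two}, where $Z$ is only guaranteed to be factorial or, when $\Cl(X)$ has torsion, merely factorially graded; so the step ``write $D=\{g=0\}$ with $g\in\kk[Z]$ prime'' is not literally available in general. The repair is the paper's: replace $D$ by the $S$-invariant Weil divisor supported on $S\cdot D$ (still contained in $Z\setminus Z_0$, since the open $G$-orbit is $S$-invariant) and invoke \cite[Proposition~2.2,~(iv)]{Ha}, by which $S$-invariant Weil divisors on $Z$ are principal; the rest of your contradiction goes through verbatim. In the toric application $Z$ is an affine space, so there your argument stands as written.
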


\begin{proof}
We check "(i) $\Rightarrow$ (iii)".
Let $X_0\subseteq X$ be an open $G$-orbit.
The condition $\mathrm{codim}_X(X\setminus X_0)\geq 2$ implies that
$p_X \colon p_X^{-1}(X_0) \to X_0$ is the universal torsor
over $X_0$ and that the complement to
$p_X^{-1}(X_0)$ in $Z$ does not contain divisors, see~\cite[Section~2]{ARH}.
By~\cite[Lemma~3.14]{ARH} (see also~\cite[Theorem~4.1]{Ar}),
the universal torsor over a homogeneous space $G/H$ is the projection
$G/H_1\to G/H$, where $H_1$ is the intersection of kernels
of all characters of the subgroup $H$. This shows that
the group $G$ acts on $p_X^{-1}(X_0)$ transitively.

In order to obtain "(iii) $\Rightarrow$ (i)" note that $p_X(Z_0)$,
where $Z_0$ is the open $G$-orbit in $Z$, is an open $G$-orbit in $X$
whose complement does not contain divisors.
The implication "(iii) $\Rightarrow$ (ii)" is obvious.

To verify  "(ii) $\Rightarrow$ (iii)"
let $Z_0\subseteq Z$ be an open $G$-orbit. Since the
subset $Z_0$ is $S$-invariant, for every prime divisor $D\subset Z$
in the complement to $Z_0$ the set $S\cdot D$ is an $S$-invariant Weil divisor.
Each $S$-invariant Weil divisor on $Z$ is a principal divisor
$\mathrm{div}(f)$ of a regular function $f \in \kk[Z]$, see
\cite[Proposition~2.2,~(iv)]{Ha}. Then the non-constant function
$f$ is invertible on $Z_0$, a contradiction.
\end{proof}

The same arguments lead to the following result.

\begin{proposition}
\label{propprop}
The action of the group $G$ on $X$ is transitive if and
only if the open subset $U\subseteq Z$ is a $G$-orbit.
\end{proposition}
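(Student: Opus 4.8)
The plan is to prove the two implications separately, along the same lines as the proof of Proposition~\ref{a}.

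Suppose first that $U$ is a single $G$-orbit. The universal torsor $p_X\colon U\to X$ is a surjective $G$-equivariant morphism, so $X=p_X(U)=G\cdot p_X(u_0)$ for any $u_0\in U$; thus $X$ is a single $G$-orbit and the $G$-action on $X$ is transitive.

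For the converse, assume that $G$ acts transitively on $X$, so that $X=X_0$ is the open $G$-orbit. Then the inequality $\mathrm{codim}_X(X\setminus X_0)\geq 2$ holds trivially, and the argument establishing the implication ``(i)$\Rightarrow$(iii)'' in Proposition~\ref{a} applies without change: it shows that $p_X\colon p_X^{-1}(X_0)\to X_0$ is the universal torsor over the homogeneous space $X_0=X$, and by \cite[Lemma~3.14]{ARH} the total space of such a torsor is again $G$-homogeneous, being a projection $G/H_1\to G/H$. Since the domain of $p_X$ is $U$, we have $p_X^{-1}(X_0)=p_X^{-1}(X)=U$, and therefore $U$ is a single $G$-orbit.

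I expect this to be essentially routine once Proposition~\ref{a} is in hand. The only things to keep track of are the surjectivity of $p_X$ (used in the first implication) and the trivial identification $p_X^{-1}(X)=U$ (used in the second), both immediate from the Cox construction recalled above; the genuine content, namely that a universal torsor over a homogeneous space is itself homogeneous, is already invoked in the proof of Proposition~\ref{a} and imported from \cite{ARH}. Hence no essentially new obstacle arises.
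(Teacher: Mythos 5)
Your proof is correct and takes essentially the same route as the paper: the paper disposes of this proposition by saying ``the same arguments'' as in Proposition~\ref{a} apply, and your two implications are exactly those arguments specialized to an empty orbit complement (the forward direction via surjectivity and $G$-equivariance of $p_X$, the converse via the universal torsor over a homogeneous space from \cite{ARH}, with $p_X^{-1}(X_0)=U$).
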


\section{Classification of homogeneous toric varieties}
\label{three}

In this section we prove Theorem~\ref{main}.
Since the variety $X$ is toric, its total coordinate
space $Z$ is an affine space.

\begin{lemma}
Let a semisimple group $G$ act on a toric variety $X$ with
an open orbit. Then $X$ is non-degenerate and the action of the
group $G \times S$ on the affine space $Z$ is equivalent to
a linear one.
\end{lemma}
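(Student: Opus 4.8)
The plan is to establish two separate assertions: first, that $X$ is non-degenerate, so that the Cox construction with $Z$ an affine space is legitimate; second, that the lifted $(G \times S)$-action on $Z \cong \kk^d$ can be linearized.

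For non-degeneracy, I would argue as follows. Suppose $f$ is a non-constant invertible regular function on $X$. The open $G$-orbit $X_0 \subseteq X$ is isomorphic to a homogeneous space $G/H$ of the semisimple (hence character-free) group $G$, and by the cited result of Knop (\cite[Proposition~1.2]{Kn}) every invertible regular function on $G/H$ is constant. So the restriction $f|_{X_0}$ is a nonzero constant $c$; then $f - c$ is a regular function vanishing on the dense open set $X_0$, hence $f \equiv c$ on all of $X$, contradicting that $f$ is non-constant. Therefore $X$ is non-degenerate, and the Cox construction applies: since $\Cl(X)$ is finitely generated and $X$ is toric, $Z$ is the affine space $\kk^d$ where $d$ is the number of prime $T$-invariant divisors, with $S \hookrightarrow \mathbb{T} = (\kk^{\times})^d$ a closed subgroup, as recalled in Section~\ref{two}.

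For linearization of the $(G \times S)$-action on $\kk^d$, the key point is that $G$ is connected semisimple, hence has trivial character group, so by the remarks in the excerpt the lift of the $G$-action to $Z$ is the unique one commuting with $S$. The group $S$ already acts linearly and diagonally via its embedding into $\mathbb{T}$. It remains to linearize $G$. Here I would invoke the general fact (for instance via \cite[Proposition~4.6]{KKLV} or Kraft's theorem on fixed points) that a reductive group acting on affine space with a fixed point can be linearized around that fixed point; the origin $0 \in \kk^d$ is fixed by $\mathbb{T}$, hence by $S$, and since $G$ commutes with $S$ and the action is unique, $G$ must fix the unique $S$-fixed point $0$ as well. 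Thus $G$ acts on $\kk^d$ fixing the origin, and reductivity yields that this action is equivalent to a linear representation; the commuting linear $S$-action is untouched, giving a linear $(G \times S)$-action. The main obstacle is making the fixed-point argument airtight: I must verify that $0$ is genuinely the only $S$-fixed point (or at least that $G$ stabilizes it), which follows because $\kk^d \quot S$ being affine and the grading coming from a quasitorus forces the $S$-invariants $\kk[\kk^d]^S$ to have a unique fixed point corresponding to the irrelevant ideal, and uniqueness of the lift pins $G$ down to respect it. Once that is in hand, the standard linearization of reductive actions fixing a point (together with the fact that $G \times S$ is reductive) completes the proof.
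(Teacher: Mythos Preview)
Your non-degeneracy argument is correct and matches the paper's. The linearization argument, however, has a genuine gap. You invoke as a ``general fact'' that a reductive group action on affine space with a fixed point is linearizable, but this is not a theorem---it is precisely the linearization problem, which is open in general and has counterexamples (e.g., Schwarz's non-linearizable $O(2)$-actions on $\kk^4$). Neither \cite[Proposition~4.6]{KKLV} nor Luna's slice theorem gives global linearization from a fixed point. Your fixed-point claim is also shaky: the origin need not be the unique $S$-fixed point (take $X=\kk^2\setminus\{0\}$, where $\Cl(X)=0$, $S$ is trivial, and every point of $\kk^2$ is $S$-fixed), so the commuting argument does not pin down a $G$-fixed point.

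The paper's route is different and avoids both issues. It first applies Lemma~\ref{lemac} to deduce that $G\times S$ acts on $Z$ with an open orbit, and then invokes \cite[Proposition~5.1]{Kr}, which is the specific result that a \emph{generically transitive} action of a reductive group on an affine space is equivalent to a linear one. The generic transitivity hypothesis is exactly what makes linearization go through here; you should use Lemma~\ref{lemac} and this Kraft--Popov result rather than attempting a fixed-point argument.
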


\begin{proof}
Since any invertible function of the open $G$-orbit is constant,
the variety $X$ is non-degenerate.
By Lemma~\ref{lemac}, the action of the group $G \times S$ on
the space $Z$ is generically transitive, and the second statement follows
from~\cite[Proposition~5.1]{Kr}.
\end{proof}

Later on we assume that $G = G_1\times \ldots \times G_m$
acts on $X$ transitively.
Denote by $V$ the total coordinate space $Z$ of the variety $X$
regarded as the $(G\times S)$-module.
We proceed with a description of the $G$-module structure on $V$.

\begin{proposition}
Let $V=V_1\oplus\ldots\oplus V_s$ be a decomposition into irreducible
summands. Then every simple component $G_i$ acts not identically only on
one summand $V_i$ (up to renumbering), and thus $m=s$. Moreover,
every $G_i$ acts on the set of nonzero vectors in $V_i$ transitively.
\end{proposition}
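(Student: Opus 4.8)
The plan is to transfer the problem to the linear $G$-action on the total coordinate space $V=Z$, which is an affine space, and then to exploit the rigidity of the Cox construction. Three facts are available from the preceding results: the $G\times S$-action on $V$ is linear (the lemma above); the open set $U\subseteq V$ is a single $G$-orbit (Proposition~\ref{propprop}); and $V\setminus U$ is a union of coordinate subspaces of codimension at least two (the Cox construction). Fix a decomposition $V=V_1\oplus\dots\oplus V_s$ into $G$-irreducible submodules. Since $G=G_1\times\dots\times G_m$ is a product of simple groups, each $V_j$ is an outer tensor product $\bigotimes_i W_{ij}$ of irreducible $G_i$-modules; set $S_j=\{\,i:\ W_{ij}\ \text{is nontrivial}\,\}$ and $G^{(j)}=\prod_{i\in S_j}G_i$.

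First come two easy reductions. No summand $V_j$ is a trivial $G$-module: the projection $\pi_j\colon V\to V_j$ is $G$-equivariant and surjective, so $\pi_j(U)$ is a single $G$-orbit which is dense in $V_j$, whereas a one-dimensional trivial module has no dense orbit; hence every $S_j$ is nonempty. And every $G_i$ acts nontrivially on $V$: otherwise $G_i$ acts trivially on $U$, hence on $X=U\quot S$, so $G_i$ lies in the kernel of the $G$-action on $X$, which is finite by local effectiveness — impossible for a positive-dimensional simple group. Therefore $\bigcup_j S_j=\{1,\dots,m\}$.

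The core step is to show that $V_j\setminus\{0\}$ is a single $G^{(j)}$-orbit for every $j$. Write $O_j:=\pi_j(U)$; it is a single $G$-orbit, dense and open in $V_j$, and $0\notin O_j$. The irreducible components of $V\setminus U$ are exactly the maximal coordinate subspaces it contains, and each of them is $G$-invariant because $G$ is connected; being a linear $G$-invariant subspace it is a $G$-submodule, so its intersection with the $G$-irreducible $V_j$ is $0$ or $V_j$. A point of $V_j\setminus\{0\}$ lying in none of these components lies in $U$, and it is fixed by $\pi_j$, so it lies in $O_j$. Hence $V_j\setminus O_j$ is contained in $\{0\}$ together with those components of $V\setminus U$ that contain $V_j$. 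If no component contains $V_j$, then $O_j=V_j\setminus\{0\}$. Otherwise $V_j$ lies inside a component $C$, which is a $G$-submodule and a coordinate subspace with $\dim C<\dim V$, and on which $G$ still acts with the dense orbit $\pi_C(U)$; one then replaces $V$ by $C$ and argues by induction on $\dim V$. I expect the verification that this induction is legitimate to be the main obstacle: one has to show that $C\setminus\pi_C(U)$ is again a union of coordinate subspaces of codimension at least two inside $C$, which calls for a careful description of how the orbit $U$ meets the coordinate strata of $V$, using that $C$ is a \emph{maximal} coordinate subspace contained in $V\setminus U$.

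Granting that each $V_j\setminus\{0\}$ is a single $G^{(j)}$-orbit, the rest is routine. Such a $G^{(j)}$ acts irreducibly on $V_j$ — the orbit of a nonzero vector of a proper direct summand would stay inside that summand — and transitively on $\PP(V_j)$. If $S_j$ contained two indices, then the image of $G^{(j)}$ in $\mathrm{PGL}(V_j)$ would preserve the Segre subvariety $\prod_{i\in S_j}\PP(W_{ij})\subset\PP(V_j)$, a proper closed subset, contradicting transitivity; hence $S_j=\{i_j\}$ is a singleton. By the classification of connected groups acting transitively on a projective space, $G_{i_j}$ is $\SL(n_i)$ or $\Sp(n_i)$ (with $n_i$ even), acting on its standard module $V_j\cong\kk^{n_i}$. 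Finally the indices $i_j$ are pairwise distinct: if $i_j=i_k$ with $j\neq k$, then $G_{i_j}$ would have a dense orbit on $V_j\oplus V_k\cong\kk^{n_i}\oplus\kk^{n_i}$ while being transitive on each punctured summand, which is impossible — either because a nonconstant $G$-invariant function appears (for $\Sp$, and for $\SL(2)$ already by dimension) or, in general, because the complement of the dense orbit there is a determinantal variety rather than a union of coordinate subspaces, against the shape of $V\setminus U$. Since $j\mapsto i_j$ is surjective by the second reduction, it is a bijection, so $s=m$, every $G_i$ acts nontrivially on exactly one summand, and transitively on its nonzero vectors. This is the assertion.
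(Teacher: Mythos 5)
Your core step is where the argument breaks down, and you flag the problem yourself. You want to show $V_j\setminus\{0\}$ is a single orbit by observing that a nonzero point of $V_j$ lying in no component of $V\setminus U$ lies in $U$, and to treat the case $V_j\subseteq C$ by an induction on dimension whose hypothesis ($C\setminus\pi_C(U)$ is again a union of coordinate subspaces of codimension at least two) you admit you cannot verify. But this "otherwise" branch is not an exceptional case — it is the generic one: once the components of $V\setminus U$ are known to be submodules, they are exactly the maximal proper submodules $\bigoplus_{k\neq i}V_k$ (every proper submodule is disjoint from the orbit $U$, hence lies in the complement), so for $s\geq 2$ \emph{every} $V_j$ is contained in a component and your first branch never applies. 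Moreover the inductive hypothesis does not come from the Cox construction, which only controls $Z\setminus U$, not the complement of $\pi_C(U)$ inside $C$; proving it would essentially require the correct argument anyway. The paper avoids the induction entirely: finiteness of the set of components forces the summands $V_1,\dots,V_s$ to be pairwise non-isomorphic, hence the maximal proper submodules are precisely the $\bigoplus_{k\neq i}V_k$, hence $U$ is exactly the set of vectors with all projections nonzero, and transitivity of $G$ on each $V_i\setminus\{0\}$ follows at once by applying the equivariant projection $\pi_i$ to the single orbit $U$.

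A second, related gap: you assert that the irreducible components of $V\setminus U$ are coordinate subspaces and therefore "linear $G$-invariant subspaces". The coordinate-subspace description comes from the Cox realization, while the linear structure comes from the Kraft--Popov linearization of the $(G\times S)$-action; these are two different identifications of $Z$ with $\kk^d$, possibly differing by a nonlinear automorphism, so the Cox coordinate subspaces need not be linear for the $G$-module structure. The paper is explicit about this ("in some, possibly nonlinear, coordinate system") and repairs it by noting that each component is intrinsically smooth, is a cone invariant under the scalar $\kk^\times$ commuting with $G$ (by uniqueness of the open orbit), and a smooth invariant cone is a linear subspace, hence a submodule. Your concluding steps (Segre cone to exclude two simple factors on one summand; the determinantal/invariant-function argument to exclude one factor on two summands, where the paper instead uses highest/lowest vectors and opposite parabolics) are sketchy but salvageable; the unproven induction at the heart of the orbit claim is the genuine gap.
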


\begin{proof}
By Proposition~\ref{propprop}, the complement of the open $G$-orbit $U$ in $V$ is a union of
coordinate subspaces (in some, possibly nonlinear, coordinate system). Thus
each irreducible component of the complement
is a smooth variety. The linear action of the group $G$ on $V$
commutes with the group $\kk^{\times}$ of scalar operators, and the
open orbit $U$ as well as any component of the complement
$V\setminus U$ is $(G \times \kk^{\times})$-invariant.
But a cone is a smooth variety if and only if it is a subspace. This shows that
each component of $V\setminus U$ is a maximal proper submodule of $V$.
In particular, the number of maximal
proper submodules is finite and thus the $G$-modules
$V_1,\ldots, V_s$ are pairwise non-isomorphic.
The orbit $U$ is the set of vectors $v \in V$ whose
projection on each $V_i$ is nonzero. This implies that the group $G$
acts on the set of nonzero vectors of each submodule $V_i$ transitively.

If several components of $G$ act on some $V_i$ not identically, then $V_i$
is isomorphic to the tensor product of simple modules of these
components. Then the cone of decomposable tensors in $V_i$ is $G$-invariant,
a contradiction.

Suppose that a simple component $G_l$ acts on both
$V_i$ and $V_j$ not identically. Then $G_l$ acts transitively on the set
of pairs $(v_i, v_j)$ with nonzero $v_i$ and $v_j$.
In particular, any such pair is an eigenvector of a Borel
subgroup of $G_l$. Fix a Borel subgroup $B\subset G_l$ and
a highest vector for $B$ in $V_i$ as $v_i$ and a lowest
vector for $B$ in $V_j$ as $v_j$. Since the intersection of two opposite parabolic
subgroups of $G_l$ does not contain a Borel subgroup, we get a contradiction.
\end{proof}

The following lemma is well known. We give a short self-contained proof
suggested by the referee.

\begin{lemma}
Finite-dimensional rational modules of a simple group $G$ such that
$G$ acts on the set of nonzero vectors transitively are
\begin{enumerate}
\item
the tautological $\SL(n)$-module $\kk^n$ and $\Sp(2n)$-module $\kk^{2n}$;
\item
the dual $\SL(n)$-module $(\kk^n)^*$.
\end{enumerate}
\end{lemma}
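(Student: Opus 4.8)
The plan is to classify finite-dimensional rational $G$-modules $W$ of a simple group $G$ on whose nonzero vectors $G$ acts transitively. First I would reduce to the question of highest weights: such a module is necessarily irreducible, since transitivity on nonzero vectors forces there to be a single nonzero $G$-orbit besides the origin, and an irreducible submodule plus a nonzero vector outside it would produce incompatible orbit structures (alternatively, a highest weight vector of an irreducible summand and a vector with a nonzero component in a complementary summand cannot lie in the same orbit, as in the previous proposition). So write $W=V(\lambda)$ for a dominant weight $\lambda$, and let $v$ be a highest weight vector. Transitivity means $G\cdot v = W\setminus\{0\}$, so $\dim G + 1 \geq \dim W$ after accounting for the stabilizer; more precisely $\dim W - 1 = \dim G - \dim G_v$, where $G_v$ is the stabilizer of the highest weight line's chosen vector.

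The key quantitative step is a dimension estimate. The stabilizer $G_v$ of a highest weight \emph{vector} contains the commutator of the parabolic $P_\lambda$ stabilizing the highest weight \emph{line}, and in fact $G_v$ is the kernel of the character by which the Levi acts on $v$; so $\dim G_v = \dim P_\lambda - 1$ when $\lambda\neq 0$. Hence transitivity is equivalent to $\dim V(\lambda) = \dim G - \dim P_\lambda + 2 = \dim(G/P_\lambda) + 2$. Now $G/P_\lambda$ is the closed $G$-orbit $\PP(\text{highest weight orbit})$ inside $\PP(V(\lambda))$, and its affine cone (the highest weight orbit closure together with $0$) has dimension $\dim(G/P_\lambda)+1$. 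So transitivity says precisely that $V(\lambda)$ equals this cone, i.e. every nonzero vector of $V(\lambda)$ is a highest weight vector for some Borel. I would then run through the classification of projective homogeneous spaces $G/P$ with $\dim V(\lambda) = \dim(G/P) + 2$: for a fundamental weight $\omega_i$, compute $\dim G/P_{\omega_i}$ and $\dim V(\omega_i)$ from the Weyl dimension formula type by type (types $A$--$G$), and observe that equality forces type $A$ with $\lambda = \omega_1$ or $\omega_{n-1}$ (giving $\kk^n$ and $(\kk^n)^*$), or type $C$ with $\lambda = \omega_1$ (giving $\kk^{2n}$); non-fundamental $\lambda$ make $V(\lambda)$ too big. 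I would also note that these three really do have the transitivity property: $\SL(n)$ and $\Sp(2n)$ obviously act transitively on nonzero vectors of $\kk^n$ and $\kk^{2n}$ (Witt's theorem for the symplectic case), and dually for $(\kk^n)^*$.

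An alternative, perhaps cleaner, route avoids case-by-case dimension counting: transitivity of $G$ on $W\setminus\{0\}$ implies transitivity of $G$ on $\PP(W)$, so $\PP(W)=G/P$ is projective homogeneous \emph{and} a projective space $\PP^{d-1}$. Projective spaces among $G/P$ are extremely restricted: $\PP^{d-1}=\SL(d)/P$ with $P$ a maximal parabolic of one of the two "extreme" types ($\omega_1$ or $\omega_{d-1}$), so $G\twoheadrightarrow \mathrm{PGL}(d)$ modulo the kernel, and one checks $G = \SL(d)$ acting via $\kk^d$ or $(\kk^d)^*$; the only coincidence that brings in $\Sp$ is $\Sp(2n)/P_{\omega_1} \cong \PP^{2n-1}$, since the symplectic form does not change the projectivized orbit. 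The main obstacle in either approach is making the "$\PP(W)$ is a projective space forces $G$ of type $A$ (or the symplectic coincidence)" step rigorous: one must either invoke the known classification of which $G/P$ are isomorphic to projective space, or argue directly that $\mathrm{Pic}(G/P)\cong\ZZ$ together with the structure of the tangent bundle / the fact that the only irreducible $G$-module with $\PP(W)=G/P$ smooth of Picard rank one and isomorphic to $\PP^{d-1}$ pins down the type. Given that the paper calls the lemma well known and promises a short referee-supplied proof, I expect the intended argument is the dimension-count version: compare $\dim V(\lambda)$ with $\dim(G/P_\lambda)+2$ directly, which is a finite check once one restricts to fundamental $\lambda$ and rules out larger $\lambda$ by monotonicity of dimension in the dominance order.
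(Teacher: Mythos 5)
Your route is genuinely different from the paper's. The paper argues infinitesimally: transitivity makes the highest-weight orbit open in $V$, so $V=\mathfrak{g}v_{\lambda}$; hence the lowest weight vector must be of the form $e_{-\alpha}v_{\lambda}$ for a single positive root $\alpha$, forcing $\alpha=\lambda+\lambda^*$ to be the highest root, and this at once singles out $\SL(n)$ with $\lambda=\omega_1,\omega_{n-1}$ and $\Sp(2n)$ with $\lambda=\omega_1$ --- no dimension formulas and no type-by-type tables are needed. Your plan (irreducibility, then ``$V(\lambda)$ equals the affine cone over the closed orbit $G/P_\lambda$'', then a dimension comparison; or, alternatively, the classification of $G/P$ isomorphic to a projective space, with the symplectic coincidence $\Sp(2n)/P_{\omega_1}\cong\PP^{2n-1}$) is a standard and workable alternative, but it buys the result only at the price of either a Weyl-dimension-formula check through all types together with an argument disposing of non-fundamental $\lambda$, or an appeal to the known classification of homogeneous projective spaces; you acknowledge this, and that case analysis is precisely the part you leave as a sketch, whereas in the paper's argument the classification step reduces to asking when a sum $\lambda+\lambda^*$ of dominant weights can be a root.

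One concrete slip must be repaired before your dimension count can be run: the orbit $W\setminus\{0\}$ has dimension $\dim W$, not $\dim W-1$, so with $\dim G_{v_\lambda}=\dim P_\lambda-1$ the correct criterion is $\dim V(\lambda)=\dim(G/P_\lambda)+1$, not $+2$. Taken literally, your ``$+2$'' test is failed by the tautological module $\kk^n$ itself ($n$ versus $(n-1)+2$). Your very next sentence --- that $V(\lambda)$ must coincide with the affine cone over the closed orbit, which has dimension $\dim(G/P_\lambda)+1$ --- is the right statement, so the inconsistency is only a slip, but this is exactly the equality you would be verifying type by type, so it has to be stated correctly.
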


\begin{proof}
Since $G$ acts on $V\setminus \{0\}$ transitively, $V$ is a simple $G$-module
of highest weight $\lambda$ and $V={\mathfrak g}v_{\lambda}$, where ${\mathfrak g}$
is the tangent algebra of the group $G$ and $v_{\lambda}$ is a highest weight vector.
In particular, a lowest weight vector is $v_{-\lambda^*}=e_{-\alpha}v_{\lambda}$,
where $\alpha$ is a positive root, whence $\alpha=\lambda+\lambda^*$ is the highest root.
This occurs only for $G=\SL(n)$ with fundamental weights $\lambda=\omega_1, \omega_{n-1}$, and
$G=\Sp(2n)$ with $\lambda=\omega_1$.
\end{proof}

Applying an outer automorphism of $G$, we may assume that
$G=G_1\times\ldots\times G_m$ and $V=V_1\oplus\ldots\oplus V_m$,
where every component $G_i$ is either $\SL(n_i)$ or
$\Sp(n_i)$, and $V_i$ is the
tautological $G_i$-module with identical action of other components.
The open $G$-orbit $U$ in $V$
coincides with the subvariety $\XX = \XX(n_1,\ldots,n_m)$. Therefore
the variety $X$ is obtained from $\XX$ by central factorization.

Let $\mathbb{S} = (\kk^{\times})^m$ be an
algebraic torus acting on $V = V_1\oplus\ldots\oplus V_m$
by component-wise scalar multiplication.
It remains to explain why for any subgroup $S\subseteq \mathbb{S}$
there exists a geometric quotient $\XX \to \XX / S$.
This follows from the fact that $\XX$ is a homogeneous space of the group
$\overline{G} : = \mathrm{GL}(n_1)\times\ldots\times\mathrm{GL}(n_m)$, and $S$ is
a central subgroup of $\overline{G}$.
The proof of Theorem~\ref{main} is completed.

\begin{remark}
The collection $(n_1,\ldots,n_m)$ is determined by a homogeneous
toric variety $X$ uniquely. Indeed, if $\kk^d \supset U \to X$ is the Cox realization
of $X$ and $C_1,\ldots,C_m$ are irreducible components of the complement
$\kk^d \setminus U$, then $n_i = d - \dim C_i$.
\end{remark}

\section{Properties of homogeneous toric varieties}
\label{four}

In this section we use standard notation of toric geometry,
see \cite{Ful}. Let $\NN$ be the lattice of
one-parameter subgroups of a $d$-dimensional torus $\mathbb{T}$
and $\MM$ be the lattice of characters of $\mathbb{T}$.
The torus $\mathbb{T}$ acts diagonally on the space
$\kk^d = V = V_1\oplus\ldots\oplus V_m$, and $\mathbb{S}\subset \mathbb{T}$
is the $m$-dimensional subtorus acting on every $V_i$ by scalar multiplication.
Identification of $\mathbb{T}$
with $(\kk^{\times})^d$ defines standard bases in $\NN$ and $\MM$.
Moreover, the decomposition $V=V_1\oplus\ldots\oplus V_m$
divides the standard basis of $\NN$ into $m$ groups $I_1,\ldots,I_m$,
where each group $I_j$ contains $n_j$ basis vectors and $n_j := \dim V_j$.
The open subvariety $\XX(n_1,\ldots,n_m) = U \subset V$
is a toric $\mathbb{T}$-variety.
Its fan $\mathcal{C}=\mathcal{C}(n_1,\ldots,n_m)$ in the lattice $\NN$
consists of the cones generated by all collections of
standard basis vectors that do not contain any subset $I_j$.

Let $S\subseteq\mathbb{S}$ be a closed subgroup. There is
a sequence of lattices of one-parameter subgroups
$\NN_S\subseteq \NN_{\mathbb{S}}\subset \NN$, where the lattice $\NN_S$ is determined by
the connected component $S^0$ of the quasitorus $S$. The fan
$\mathcal{C}_{S^0}$ of the quotient space $\XX/S^0$ is the image of
the fan $\mathcal{C}$ under the projection
$$
\NN_{\mathbb{Q}} \ \to \ (\NN/\NN_S)_\mathbb{Q}.
$$
The fan $\mathcal{C}_S$ of the variety
$\XX/S$ coincides with the fan $\mathcal{C}_{S^0}$ considered with
regard to an overlattice of $\NN/\NN_S$ of finite index, see
\cite[Section~2.2]{Ful}. In particular, the fan
$\mathcal{C}_\mathbb{S}$ coincides with the fan
$\mathcal{P}$ of the product of
projective spaces
$\mathbb{P}^{n_1-1}\times\ldots\times\mathbb{P}^{n_m-1}$, and
$\mathcal{C}_S$ may be considered as an intermediate step of the projection:
$$
\mathcal{C}\rightarrow\mathcal{C}_S\rightarrow \mathcal{P}.
$$
Let us define a sublattice $\MM_S \subseteq \MM$ as the set of
characters of the torus $\mathbb{T}$ containing $S$ in the kernel.
Elements of $\MM_S$ are linear functions on the
space $(\NN/\NN_S)_\mathbb{Q}$.

\begin{proposition}
Let $X = \XX/S$ be a homogeneous toric variety. Then
\begin{enumerate}
\item{} the variety $X$ is quasiprojective;
\item{} the variety $X$ is not affine;
\item{} the variety $X$ is projective if and only if it coincides with
$\mathbb{P}^{n_1-1}\times\ldots\times\mathbb{P}^{n_m-1}$;
\item{} the variety $X$ is quasiaffine if and only if the lattice $\MM_S$ contains
a vector with positive coordinates;
\item{} the variety $X$ has a nonconstant regular function if and only if
the lattice $\MM_S$ contains a nonzero vector with nonnegative coordinates.
\end{enumerate}
\end{proposition}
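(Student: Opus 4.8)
The plan is to prove the five assertions separately, using two facts throughout. Since $V\setminus\XX$ is a union of coordinate subspaces of codimension $n_j\ge 2$, one has
$\kk[X]=\kk[\XX]^S=\kk[V]^S=\bigoplus_{a}\kk\,x^a$,
the sum over those $a\in\ZZ^d_{\ge0}$ for which the monomial $x^a$ in the coordinates $x_1,\dots,x_d$ of $V=\kk^d$ is $S$-invariant, i.e. over $a\in\MM_S\cap\ZZ^d_{\ge0}$. And since the $\mathbb{S}$-action on $\XX$ is free, $\XX\to\YY$ is a principal $\mathbb{S}$-bundle, so $p^X\colon X\to\YY$ exhibits $X$ as a principal bundle over the projective variety $\YY$ with structure group the torus $\mathbb{S}/S$ (a quotient of a torus by a closed subgroup is again a torus), of dimension $r:=m-\dim S$; in particular $\mathbb{S}/S$ acts freely on $X$ and $\dim\YY=\sum_j(n_j-1)\ge m\ge 1$.

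\emph{Parts (1) and (3).} The principal $(\mathbb{S}/S)$-bundle $p^X$ is classified by line bundles $L_1,\dots,L_r$ on $\YY$, and $X$ is the complement of the union of the zero sections in the total space $\mathrm{Tot}(L_1\oplus\dots\oplus L_r)$. As $\YY$ is projective, this total space is the complement of the divisor $\PP(L_1\oplus\dots\oplus L_r)$ in the projective bundle $\PP(L_1\oplus\dots\oplus L_r\oplus\mathcal{O}_{\YY})$, hence quasiprojective; so $X$, being open in it, is quasiprojective: this is (1). For (3), ``$\Leftarrow$'' is trivial; conversely, if $X$ is complete then $p^X$ is proper, so its fibres --- the free $(\mathbb{S}/S)$-orbits, each isomorphic to $(\kk^{\times})^{r}$ --- are complete, forcing $r=0$, hence $S^{0}=\mathbb{S}$, hence $S=\mathbb{S}$ and $X=\XX/\mathbb{S}=\YY$.

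\emph{Parts (2) and (5).} From the displayed formula, $\kk[X]\ne\kk$ iff $\MM_S\cap\ZZ^d_{\ge0}\ne\{0\}$, which is exactly (5). The same computation with $S$ replaced by $\mathbb{S}$ shows that the only $\mathbb{S}$-invariant regular functions on $V$ are constants, so $\kk[X]^{\mathbb{S}/S}=(\kk[V]^S)^{\mathbb{S}/S}=\kk[V]^{\mathbb{S}}=\kk$. If $X$ were affine, then $X\to\mathrm{Spec}\,(\kk[X]^{\mathbb{S}/S})=\mathrm{Spec}\,\kk$ would be a categorical quotient for the action of the reductive group $\mathbb{S}/S$; but $p^X$ is a categorical (in fact geometric) quotient for that same action, so by uniqueness of categorical quotients $\YY$ would be a point, contradicting $\dim\YY\ge1$. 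This proves (2).

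\emph{Part (4).} Write $e_1,\dots,e_d$ for the standard basis of $\NN$, $\pi$ for the projection to $(\NN/\NN_S)_\QQ$, $u_j:=\sum_{k\in I_j}e_k$ (so $\NN_S\subseteq\NN_{\mathbb{S}}=\langle u_1,\dots,u_m\rangle$), and $\sigma_A:=\mathrm{cone}(e_k:k\in A)$ for the cones of $\mathcal{C}$, where $A$ contains no $I_j$; note $\pi(e_k)\ne0$ for all $k$. One has $\kk[X]=\kk[\,\omega^{\vee}\cap\MM_S\,]$ with $\omega:=\mathrm{cone}(\pi(e_1),\dots,\pi(e_d))$ the cone spanned by the rays of $\mathcal{C}_S$, so $\mathrm{Spec}\,\kk[X]$ is the affine toric variety $X_{\omega}$, and $X=X_{\mathcal{C}_S}$ is quasiaffine iff the canonical morphism $X\to X_{\omega}$ is an open immersion, i.e. iff every cone of $\mathcal{C}_S$ is a face of $\omega$. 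This in particular makes $\omega$ strictly convex, which (all $\pi(e_k)\ne0$) is equivalent to the existence of a linear functional positive on every $\pi(e_k)$, i.e., after rescaling, to $\MM_S$ containing a vector with positive coordinates; this settles ``only if''. For ``if'', assume $\MM_S$ has a vector with positive coordinates, so $\omega$ is strictly convex; it remains to verify that each $\pi(\sigma_A)$ is a face of $\omega=\pi(\text{positive orthant})$. The coordinate face $\sigma_A$ is supported by a functional $\ell$ vanishing on the coordinates in $A$ and positive on the others; using $\NN_S\subseteq\langle u_1,\dots,u_m\rangle$ one can choose such an $\ell$ lying in $\MM_S$, and then $\ell$, viewed on $(\NN/\NN_S)_\QQ$, is a supporting functional with $\omega\cap\{\ell=0\}=\pi(\sigma_A)$. (Equivalently, $X$ is quasiaffine exactly when $\XX$ is $S$-saturated in $V$, and by a theorem of the alternative this reduces to the same condition on $\MM_S$.) I expect this ``if'' direction to be the main obstacle: it is exactly here that the special position $\NN_S\subseteq\langle u_1,\dots,u_m\rangle$ is used essentially --- for a general projection $\pi$ a strictly convex cone of rays need not have the images of the coordinate faces among its faces, and a general invariant open subset of an affine space need not be saturated --- so the technical core of the proof is the construction of the functionals $\ell\in\MM_S$, one for each admissible $A$.
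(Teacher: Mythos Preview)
Your argument is correct. For parts~(4) and~(5) you follow essentially the paper's route: compute $\kk[X]$ as the $S$-invariant monomials, reduce quasiaffineness to the question of whether each cone $\pi(\sigma_A)$ is a face of $\omega$, and construct the supporting functionals $\ell\in\MM_S$ by redistributing the group-sums of a given positive vector $v$ onto the coordinates outside $A$. The paper phrases this last step slightly differently (``the desired vector should have the same sums of coordinates over the groups as the vector~$v$''), but the content is identical.

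Where you genuinely diverge is in (1), (2), (3). The paper disposes of these in one line each: Chevalley's theorem for~(1); the Cox criterion ``$X$ affine $\Leftrightarrow U=V$'' for~(2); a count of the maximal cone dimension in $\mathcal{C}$ for~(3). Your arguments are more geometric and self-contained: for~(1) you realize $X$ concretely as an open set in a projective bundle $\PP(L_1\oplus\cdots\oplus L_r\oplus\mathcal{O}_{\YY})$ over $\YY$; for~(2) you invoke uniqueness of categorical quotients to derive a contradiction from $\kk[V]^{\mathbb{S}}=\kk$; for~(3) you use properness of $p^X$ to force the torus fiber $(\kk^\times)^r$ to be complete. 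Each of your arguments is a little longer but avoids appealing to the named external results, and your treatment of~(1) has the pleasant by-product of an explicit projective compactification. One small imprecision: in~(1) the ``union of the zero sections'' should be read as the union of the coordinate sub-bundles $\{L_i\text{-component}=0\}$ inside $\mathrm{Tot}(L_1\oplus\cdots\oplus L_r)$, not the single zero section of the direct sum; with that reading the identification with the principal bundle is correct.
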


\begin{proof}
(1) By Chevalley's Theorem, any homogeneous space of an affine
algebraic group is a quasiprojective variety.

(2) A toric variety obtained via Cox construction is affine if
and only if $U=V$. In our situation this is not the case.

(3) Maximal dimension of a cone in the fan $\mathcal{C}$
equals $n_1+\ldots+n_m-m$. Therefore the fan $\mathcal{C}_S$
is complete if and only if it is obtained from $\mathcal{C}$
by projection to $(\NN/\NN_\mathbb{S})_\mathbb{Q}$, and thus
$\mathcal{C}_S$ coincides with $\mathcal{P}$.

(4) A toric variety is quasiaffine if and only if its fan is
a collection of faces of a strongly convex polyhedral cone.
In our case, this condition implies that the projection $K$ of
the support of the fan $\mathcal{C}$ to $(\NN/\NN_S)_\mathbb{Q}$
is a strongly convex cone. The latter
is equivalent to existence of a linear function on the
space $(\NN/\NN_S)_\mathbb{Q}$ that is positive on $K \setminus \{0\}$.
This gives the desired element of the lattice $\MM_S$.

Conversely, assume that the lattice $\MM_S$ contains
a vector $v$ with positive coordinates.
We have to show that the projection of
each cone of the fan $\mathcal{C}$ is a face of $K$.
Fix proper subsets $J_1\subset I_1,\ldots,J_m \subset I_m$
of the sets of standard basis vectors of the lattice $\NN$.
We claim that
there is an element of the lattice $\MM_S$, which vanishes on
the vectors of $J_1\cup\ldots\cup J_m$ and is positive on other
standard basis vectors. Indeed, the sublattice $\MM_S$ is defined in terms of the sums
of coordinates of a character over all $m$ groups of its coordinates.
The desired vector should have the same sums of coordinates
over the groups as the vector $v$.

(5) Since regular functions on $X$ form a rational
$\mathbb{T}$-module, one may consider only
$\mathbb{T}$-semiinvariant regular functions.
Further, regular $\mathbb{T}$-semiinvariants on $X$ correspond to
characters from $\MM_S$ that are nonnegative on the rays of
the fan $\mathcal{C}$, see \cite[Section~3.3]{Ful}.
\end{proof}

\begin{remark}
Let $X$ be a homogeneous toric variety. Then $X$ is projective if and only
if $X$ contains a $\mathbb{T}$-fixed point. Indeed, the latter condition means
that the fan $\mathcal{C}_S$ contains a cone of full dimension, thus $\NN_S=\NN_{\mathbb{S}}$
and $S=\mathbb{S}$.
\end{remark}

\begin{example}
Let $m=2$ and $n_1=n_2=2$. Then
$\XX = (\kk^2\setminus\{0\})\times(\kk^2\setminus\{0\})$. Set
$S=\{(s,s,s,s):s\in\kk^\times\}.$ Then
$$
\MM_S \, = \, \{(x_1,x_2,x_3,x_4) \, ; \, x_i \in \ZZ, \, x_1+x_2+x_3+x_4 \, = \, 0\},
$$
and the variety $X$ is $\mathbb{P}^3\setminus (D_1\cup D_2)$, where
$D_i \cong \mathbb{P}^1.$ If we set
$S=\{(s,s,s^{-1},s^{-1}):s\in\kk^\times\},$ then
$$
\MM_S \, = \, \{(x_1,x_2,x_3,x_4) \, ; \, x_i \in \ZZ, \, x_1+x_2 \, = \, x_3+x_4\},
$$
and $X$ is a three-dimensional quadratic cone with the apex removed.
\end{example}

Let us characterize the fans of homogeneous toric varieties. Let $N$ be a
lattice, $\Delta$ be a fan in
$N_{\mathbb{Q}}$ and $P$
be the set of primitive vectors on the rays of
$\Delta$. Denote by $N_0$ a sublattice of $N$ generated
by $P$. Fix a positive integer $m$.

\begin{definition}
A fan $\Delta$ is called {\em $m$-partite\/} if
\begin{itemize}
\item{} the set $P$ spans the vector space $N_\mathbb{Q}$;
\item{} the set $P$ can be decomposed into $m$ subsets
$P = I_1 \sqcup \ldots \sqcup I_m $, where each $I_j$ contains
at least two elements, and the cones of
$\Delta$ are exactly the cones generated by subsets
$J\subset P$ that do not contain any $I_j$.
\end{itemize}
\end{definition}

Set $I_j=\{e_1^j,\ldots,e_{n_j}^j\}$ and
$q_j=e_1^j+\ldots+e_{n_j}^j$. Let $Q$ be a sublattice of $N$
generated by $q_1,\ldots,q_m$, and
$Q_\mathbb{Q}=Q\otimes_\mathbb{Z}\mathbb{Q}$.

\begin{proposition}
\label{x}
A fan $\Delta$ is the fan of a homogeneous toric variety if and only if
\begin{enumerate}
\item $\Delta$ is $m$-partite for some $m \ge 1$;
\item every linear relation among  elements of $P$ has the
form $\lambda_1q_1+\ldots+\lambda_mq_m=0$ for some rational $\lambda_i$;
\item $N\subset N_0+Q_\mathbb{Q}$.
\end{enumerate}
\end{proposition}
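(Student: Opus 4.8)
The plan is to run both implications through the explicit description of the fan of a homogeneous toric variety recalled above: such a fan is $\mathcal{C}_S$ for a closed subgroup $S\subseteq\mathbb{S}=(\kk^{\times})^m$. Concretely, write $\NN=\ZZ^{n_1+\ldots+n_m}$ with standard basis partitioned into groups $I_1^{\NN}\sqcup\ldots\sqcup I_m^{\NN}$ of sizes $n_j\ge 2$, put $q_j^{\NN}=\sum_{e\in I_j^{\NN}}e$ and let $\NN_{\mathbb{S}}\subset\NN$ be the (saturated) sublattice spanned by the $q_j^{\NN}$; then $\NN_S\subseteq\NN_{\mathbb{S}}$, and $\mathcal{C}_S$ is the image of $\mathcal{C}(n_1,\ldots,n_m)$ under the projection $\pi\colon\NN_{\QQ}\to(\NN/\NN_S)_{\QQ}$, taken in the lattice $N=(\NN/\NN_S)+L$, where $L$ is a finite-index overlattice of $\NN_{\mathbb{S}}/\NN_S$ cut out by the finite group $\Gamma=S/S^0\subseteq\mathbb{S}/S^0$ (this is how passing to the finite quotient affects the lattice, cf.\ \cite[Section~2.2]{Ful}). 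Two elementary facts will be used throughout: (a) for every cone $\sigma$ of $\mathcal{C}(n_1,\ldots,n_m)$ one has $\mathrm{span}(\sigma)\cap(\NN_{\mathbb{S}})_{\QQ}=\{0\}$, since $(\NN_{\mathbb{S}})_{\QQ}$ consists of the vectors constant on each group of coordinates while $\sigma$ is spanned by basis vectors omitting a member of every group; hence $\pi$ is injective on each cone of $\mathcal{C}(n_1,\ldots,n_m)$ and sends distinct rays to distinct rays; (b) the image $\overline{e}$ of each basis vector $e$ stays primitive in $N$, because a relation $\overline{e}=kw$ with $k>1$ would, after lifting to $\NN_{\QQ}$ and reducing modulo $(\NN_S)_{\QQ}\subseteq(\NN_{\mathbb{S}})_{\QQ}$, produce a vector of $(\NN_{\mathbb{S}})_{\QQ}$ with a coordinate equal to $1/k$.

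Assume first $\Delta=\mathcal{C}_S$ in the lattice $N$. By (a) and (b), $P$ is precisely the set of images $\overline{e}$ of the standard basis vectors, the subsets $I_j:=\pi(I_j^{\NN})$ satisfy $|I_j|=n_j\ge 2$, the cones of $\Delta$ are exactly those generated by subsets of $P$ containing no $I_j$, and $P$ spans $N_{\QQ}$; thus $\Delta$ is $m$-partite, which is (1). A relation $\sum_{i,j}\mu_i^j\,\overline{e_i^j}=0$ in $N$ lifts to $\sum_{i,j}\mu_i^j e_i^j\in(\NN_S)_{\QQ}\subseteq(\NN_{\mathbb{S}})_{\QQ}$, so $\mu_i^j=\lambda_j$ depends only on $j$ and the relation reads $\sum_j\lambda_j q_j=0$; this is (2). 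Finally $N_0=\langle P\rangle=\NN/\NN_S$, and $N=(\NN/\NN_S)+L=N_0+L\subseteq N_0+(\NN_{\mathbb{S}}/\NN_S)_{\QQ}=N_0+Q_{\QQ}$, which is (3).

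Conversely, let $(N,\Delta)$ satisfy (1)--(3). Put $n_j=|I_j|$, take $\NN=\ZZ^{n_1+\ldots+n_m}$ with basis $\{e_i^j\}$ grouped as above, and define $\phi\colon\NN\to N$ by sending $e_i^j$ to the $i$-th primitive generator of $I_j$; then $\phi(\NN)=N_0$ and $\phi(q_j^{\NN})=q_j$. By (2) the kernel $R:=\ker\phi$ is contained in $\NN_{\mathbb{S}}$, hence is a saturated sublattice of $\NN_{\mathbb{S}}$, and $\phi$ induces an isomorphism $\NN/R\xrightarrow{\ \sim\ }N_0$. Let $S^0\subseteq\mathbb{S}$ be the subtorus with cocharacter lattice $R$; then $\NN_{S^0}=R$, and by the description above the fan $\mathcal{C}_{S^0}$ of $\XX(n_1,\ldots,n_m)/S^0$, carried along $\NN/R\cong N_0$, is exactly $\Delta$ considered in the sublattice $N_0\subseteq N$. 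By (3), every $v\in N$ splits as $a+b$ with $a\in N_0$ and $b=v-a\in N\cap Q_{\QQ}$, so $N=N_0+N_1$ with $N_1:=N\cap Q_{\QQ}$ a finite-index overlattice of $Q$; under $\NN/R\cong N_0$ the sublattice $Q=\phi(\NN_{\mathbb{S}})$ is the cocharacter lattice of the subtorus $\mathbb{S}/S^0$, and $N_1$ corresponds to a finite subgroup $\Gamma\subseteq\mathbb{S}/S^0$. Taking $S\subseteq\mathbb{S}$ to be the closed subgroup with identity component $S^0$ and $S/S^0=\Gamma$, the variety $\XX(n_1,\ldots,n_m)/S=(\XX(n_1,\ldots,n_m)/S^0)/\Gamma$ is a homogeneous toric variety by Theorem~\ref{main}, and by \cite[Section~2.2]{Ful} its fan coincides, under the identification above, with $\mathcal{C}_{S^0}$ considered in the lattice $N_0+N_1=N$, i.e.\ with $\Delta$.

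I expect the most delicate point to be the bookkeeping with the non-connected part of $S$: one must verify precisely that the lattice enlargement ($\NN/\NN_S\subseteq N$ in the direct implication, $N_0\subseteq N$ in the converse) is realised by a finite subgroup of the subtorus $\mathbb{S}/S^0$ and remains inside $Q_{\QQ}$ --- which is what condition (3) records --- and to keep track of the saturation statements ($\NN_{\mathbb{S}}$ saturated in $\NN$, and $R=\ker\phi$ saturated in $\NN_{\mathbb{S}}$) that guarantee the existence of subtori with the prescribed cocharacter lattices, which is where condition (2) is used. Condition (1) is essentially the combinatorial skeleton of the construction and causes no difficulty.
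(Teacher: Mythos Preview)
Your argument is correct and follows essentially the same route as the paper's own proof: identify condition~(1) with the statement that $\Delta$ arises as a projection of some $\mathcal{C}(n_1,\ldots,n_m)$, condition~(2) with the requirement that the kernel of the projection sits in $(\NN_{\mathbb{S}})_{\QQ}$ (so comes from a subtorus $S^0\subseteq\mathbb{S}$), and condition~(3) with the fact that the overlattice $N\supseteq N_0$ is realised by a finite subgroup of $\mathbb{S}/S^0$. The paper states these three equivalences in a single sentence each; you have supplied the verifications (injectivity of $\pi$ on cones, primitivity of the images $\overline{e}$, saturation of $R$ in $\NN_{\mathbb{S}}$, the decomposition $N=N_0+N_1$) that the paper leaves implicit, and your closing remarks correctly flag the finite-quotient bookkeeping as the only place requiring care.
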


\begin{proof}
A fan is $m$-partite if and only if it is a projection of the fan
$\mathcal{C}(n_1,\ldots,n_m)$ with some $n_i \ge 2$.
Condition 2 means that the kernel of the projection
is of the form $(\NN_{S^0})_{\QQ}$, where
$S\subseteq\mathbb{S}$. Finally, condition 3 means that
$N$ is generated by $P$ and some elements
$$
\frac{r_{1i}}{R_i}q_1+\ldots+\frac{r_{mi}}{R_i}q_m, \quad \text{where} \ \
r_{ji} \in \ZZ_{\ge 0}, \ \ R_i \in \ZZ_{>0}, \ \ r_{ji} \, < \, R_i, \ \
\text{and} \ \ i = 1,\ldots, l.
$$
Equivalently, the corresponding toric variety
is obtained as the quotient of
the variety \linebreak $\XX(n_1,\ldots,n_m)/S^0$ by an action of the
group $\Gamma = \Gamma_1 \times \ldots \times \Gamma_l$, where $\Gamma_i$ is the cyclic
group of $R_i$-th roots of unity and an element $\epsilon \in \Gamma_i$
multiplies the $j$-th factor of $\XX(n_1,\ldots,n_m)$ by $\epsilon^{r_{ji}}$.
\end{proof}

\section{Some generalizations}
\label{five}

Let a connected reductive group $G$ act on a
toric variety $X$ transitively.
One may assume that $G = G^s \times L$, where
$G^s$ is a simply connected semisimple group, $L$ is a central
torus, and the $G$-action on $X$ is locally effective.
It is well known that
any toric variety $X$ is isomorphic to a direct product
$X_0 \times X_1$, where $X_0$ is a non-degenerate toric variety
and $X_1$ is an algebraic torus.

Let us give a construction of a transitive $G$-action on a toric variety $X$.
Take a $G^s$-homogeneous toric variety $X_0$ with a locally effective
and $G^s$-equivariant action of a quasitorus $L'$.
Fix an inclusion $L'\subseteq L$
into an algebraic torus $L$ as a closed subgroup. The group $G = G^s \times L$ acts
on $X_0\times L$, where $G^s$ acts on the first factor and $L$ acts on the second
one by multiplication.  Consider the $G$-equivariant action of $L'$ on $X_0\times L$
given by $(x_0,l) \mapsto (sx_0, s^{-1}l)$ for every $s\in L'$. Then
$$
X(X_0, G^s, L', L) \, := \, (X_0 \times L)/ L'
$$
is a $G$-homogeneous toric variety.

\begin{proposition}
Let $X$ be a toric variety endowed with a transitive and locally effective action of a connected reductive group $G=G^s\times L$.
Then the non-degenerate factor $X_0$ of $X$
is a $G^s$-homogeneous toric variety. Moreover,
if $L'$ is the stabilizer of a $G^s$-orbit on $X$ in the torus $L$,
then $X$ is $G$-equivariantly isomorphic to $X(X_0, G^s, L', L)$.
\end{proposition}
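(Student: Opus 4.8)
The plan is to decompose $X$ into its non-degenerate factor and a torus, to recognize the $G^s$-action on the non-degenerate factor as homogeneous, and then to reconstruct the global $G$-action as the advertised twisted product.

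First I would write $X \cong X_0 \times X_1$, where $X_0$ is non-degenerate and $X_1 \cong (\kk^\times)^k$ is an algebraic torus, using the splitting result recalled just before the proposition. Since $G$ acts transitively and $L$ is a central torus, the derived group $G^s$ acts with a dense orbit on $X$ whose closure is $G^s$-invariant; but the regular functions on this orbit are constant (by the Knop reference cited in Section~2), so the dense $G^s$-orbit must land inside the non-degenerate part. More precisely, the projection $X \to X_1$ to the torus factor sends any $G^s$-orbit to a point because $X_1$ carries only constant invariant functions obstructed the other way — rather, $G^s$ being semisimple has no nontrivial characters and $X_1$ is an affine torus, so every $G^s$-equivariant morphism $X \to X_1$ is constant on $G^s$-orbits, forcing each $G^s$-orbit into a fibre $X_0 \times \{t\}$. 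Fixing such a fibre, $G^s$ acts on $X_0 \times \{t\} \cong X_0$ transitively, so $X_0$ is indeed a $G^s$-homogeneous toric variety in the sense of Theorem~\ref{main}.

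Next I would identify the twisting subgroup. The torus $L$ acts on $X$ commuting with $G^s$; restricting this action to the non-degenerate factor yields a $G^s$-equivariant $L$-action on $X_0$, and I set $L'$ to be the kernel of the $L$-action on the set of $G^s$-orbits, equivalently the stabilizer in $L$ of the chosen $G^s$-orbit. Because $G^s \times L$ acts transitively on $X$ and the $G^s$-orbit space is acted on by $L$ transitively (the $G^s$-orbits are permuted transitively by $G$, hence by $L$ since $G^s$ fixes its own orbit), the $L$-action on the orbit space $X / G^s$ is transitive with stabilizer $L'$, so $X/G^s \cong L/L'$. This exhibits $X$ as an $(L/L')$-bundle of $G^s$-homogeneous toric varieties, and I would check that $L'$ is a closed subgroup of $L$ (it is the stabilizer of a point under an algebraic action, hence closed) and that $L'$ acts on $X_0$ locally effectively and $G^s$-equivariantly, which is inherited from the hypotheses on $G$.

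Finally I would produce the explicit isomorphism with $X(X_0, G^s, L', L) = (X_0 \times L)/L'$. Define a map $X_0 \times L \to X$ by $(x_0, l) \mapsto l \cdot x_0$, where on the right $x_0$ is viewed inside the fixed $G^s$-orbit $X_0 \times \{t\}$ and $l \in L$ acts via the given $L$-action on $X$. This map is $G$-equivariant for the $G$-action on $X_0 \times L$ described in the construction (with $G^s$ on the first factor and $L$ by left multiplication on the second), it is surjective because $G = G^s \times L$ acts transitively on $X$ and $G^s$ already sweeps out $X_0 \times \{t\}$, and its fibres are exactly the $L'$-orbits for the twisted action $(x_0, l) \mapsto (s x_0, s^{-1} l)$: indeed $l x_0 = l' x_0'$ iff $l^{-1} l' \in L'$ and $x_0' = (l'^{-1} l) x_0$. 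Passing to the quotient gives the $G$-equivariant isomorphism $X(X_0, G^s, L', L) \xrightarrow{\sim} X$. The main obstacle I expect is the bookkeeping in the second step — verifying that every $G^s$-orbit really lies in a single fibre of $X \to X_1$ and that the induced $L$-action on $X/G^s$ is a transitive action of a torus with the correct stabilizer — since this is where one must carefully use that $G^s$ is semisimple (no characters, constant invertible functions on homogeneous spaces) rather than merely reductive, and where the non-degeneracy of $X_0$ is essential to pin down the decomposition uniquely.
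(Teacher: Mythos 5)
Your overall route is the same as the paper's (split $X \cong X_0 \times X_1$, push each $G^s$-orbit into a fibre $X_0\times\{t\}$, then exhibit $X$ via the map $X_0\times L\to X$, $(x_0,l)\mapsto l\cdot x_0$, whose fibres are the twisted $L'$-orbits), but there is a genuine gap at the very first assertion of the proposition: you show that a $G^s$-orbit $Y$ is \emph{contained} in a fibre $X_0\times\{t\}$ and then simply declare that ``$G^s$ acts on $X_0\times\{t\}$ transitively.'' Nothing you have written rules out that this fibre is a union of several $G^s$-orbits (a priori $L$ need not preserve the fibres of the projection $X\to X_1$, so it could move orbits around inside a fibre), and the equality $Y=X_0\times\{t\}$ is exactly the claim that the non-degenerate factor is $G^s$-homogeneous. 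Your later fibre computation (from $l x_0=l'x_0'$ you deduce $l'^{-1}l\in L'$) also silently uses it: from $s\,x_0=x_0'$ one can conclude $s\in L'=\mathrm{Stab}_L(Y)$ only when $x_0,x_0'$ lie in the orbit $Y$, not merely in the fibre.

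The paper closes precisely this gap by a short argument you should add. Let $L'=\mathrm{Stab}_L(Y)$ and let $H$ be the stabilizer of a point of $Y$; then $H\subseteq G^s\times L'$, so $X\cong G/H$ projects $G$-equivariantly onto $G/(G^s\times L')\cong L/L'$, and since $L'\cdot Y=Y$ and $L$ commutes with $G^s$, the fibres of this projection are exactly the $G^s$-orbits. Now non-degeneracy of $X_0$ enters: every invertible regular function on $X_0\times\{t\}\cong X_0$ is constant, whereas points of $L/L'$ are separated by invertible functions, so the whole fibre $X_0\times\{t\}$ maps to a single point of $L/L'$ and hence lies in a single $G^s$-orbit; together with $Y\subseteq X_0\times\{t\}$ this yields $Y=X_0\times\{t\}$. (Two smaller slips: $G^s$ need not have a dense orbit on $X$ --- you correct this yourself mid-argument --- and the full torus $L$ does not act on $X_0$ identified with the fibre; only $L'$ does, which is all you actually use.) With this step inserted, the remainder of your argument coincides with the paper's proof.
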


\begin{proof}
Since the $G^s$- and $L$-actions on $X$ commute, all $G^s$-orbits
are of the same dimension. Let $Y$ be one of these orbits.
Any invertible function on $Y$ is constant.
Consider the above decomposition $X = X_0 \times X_1$.
Since points on $X_1$ are separated by invertible functions,
$Y$ is contained in a subvariety $X_0 \times \{x_1\}$, where $x_1 \in X_1$.
Let $L'$ be the stabilizer of the subvariety $Y$ in the
torus $L$. Then the stabilizer $H$ of a point $x\in Y$ is
contained in the subgroup $G^s\times L'$ and the homogeneous
space $G/H$ projects onto $G/(G^s\times L')\cong L/L'$.
Points on $L/L'$ are separated by invertible
functions, hence $X_0 \times \{x_1\}$ is contained in a fiber of
the projection. But the fibers coincide with $G^s$-orbits on $X$. This
implies $Y = X_0\times \{x_1\}$.

Let us identify the variety $X_0$ with the subvariety $Y \subseteq X$.
Consider the morphism
$$
\varphi \colon X_0 \times L \ \to \ X, \quad (x_0,l) \ \mapsto \ lx_0.
$$
Two pairs $(x_0, l)$ and $(\widetilde{x_0}, \widetilde{l})$ are in the
same fiber of $\varphi$ if and only if
$(\widetilde{x_0}, \widetilde{l})=(sx_0, s^{-1}l)$ with $s=\widetilde{l}^{-1}l$.
This shows that $\varphi$ induces a bijective morphism $X(X_0, G^s, L', L) \to X$.
Clearly, this is an isomorphism of $G$-homogeneous spaces.
\end{proof}

If the subgroup $L'$ is connected, then $L \cong L' \times L''$ with some
complementary subtorus $L''$, and
$X \cong X_0\times L''$. But unlike the case
of algebraic transformation groups of maximal rank
\cite[Theorem~2]{Vin}, this situation does not always
occur. Indeed, one may consider a toric variety
$(\kk^2 \setminus \{0 \}) \times \kk^\times$ with a transitive locally
effective action of the group $\SL(2) \times \kk^\times$ given as
$(g,t) \cdot (v,a) \, = \, (g(tv), \, t^2a)$.

\begin{remark}
It would be interesting to generalize \cite[Theorem~3]{Vin}
and to describe toric varieties with transitive
actions of non-reductive affine algebraic groups.
\end{remark}

Besides homogeneous toric varieties, our method allows to describe
toric varieties with a generically transitive action of a semisimple
group $G$. By Lemma~\ref{lemac}, they are quasitorus
quotients of open subsets of generically transitive
$(G \times \mathbb{S})$-modules.
Such modules are known as $(G \times \mathbb{S})$-prehomogeneous
vector spaces.
For an explicit description, one needs a list of
prehomogeneous vector spaces. The classification results here
are known only under some restrictions on the group and on the module.
For example, if $G$ is simple and the number of
irreducible summands of the module does not exceed three, the
classification is given in a series of papers of
M.Sato, T.Kimura, K.Ueda, T.Yoshigiaki and others.

If the complement of an open $G$-orbit on a toric variety $X$
has codimension at least two in $X$, then $X$ comes from a
$G$-prehomogeneous vector space
(Proposition~\ref{a}). When the group $G$ is simple, the list
of $G$-prehomogeneous vector spaces is obtained in \cite[Theorems~7-8]{Vin1},
and the corresponding toric varieties are described in \cite[Proposition~4.7]{ARH}.
In constrast to the homogeneous case, here appear singular
\cite[Example~5.8]{ARH} and non-quasiprojective \cite[Example~5.9]{ARH} varieties.

\end{document}